\newcommand{\Z}{{\mathbb Z}}
\newcommand{\F}{{\mathbb F}}
\newcommand{\cB}{{\mathcal B}}
\newcommand{\beq}{\begin{equation}}
\let\workingver=n
\def\begeq#1{\begin{equation}\mylabel{#1}}
\def\endeq{\end{equation}}
\def\begalg{\begin{alg}}
\def\endalg{\end{alg}}
\def\refeq#1{\if\workingver y(\ref{#1})-[[#1]]\else(\ref{#1})\fi}
\def\refth#1{\if\workingver y\ref{#1}-[[#1]]\else\ref{#1}\fi}
\def\mylabel#1{\if\workingver y\label{#1}{\bf\ \ [[#1]]\ \ }
\else\label{#1}\fi}
\def\mybibitem#1{\if\workingver y\bibitem{#1}{\bf\ \ [[#1]]\ \ }
\else\bibitem{#1}\fi}
\def\uu{{\mathbf u}}
\def\ww{{\mathbf w}}
\def\vv{{\mathbf v}}
\def\xx{{\mathbf x}}
\def\00{{\mathbf 0}}
\newcommand{\V}{\mathbb{V}}
\newcommand{\BBF}{\mathbb{F}}
\def\cB{{\mathcal B}}
\def\uu{{\bf u}}
\def\vv{{\bf v}}
\def\ww{{\bf w}}
\def\xx{{\bf x}}
\def\00{{\bf 0}}
\def\11{{\bf 1}}
\def\+{\oplus}
\def\ss{{\bf s}}
\def \F {{\mathbb F}}
\def \Z {{\mathbb Z}}
\def \V {{\mathbb V}}
\def\W{\mathcal{W}}
\begin{document}
\newtheorem{thm}{Theorem}

\newtheorem{ex}[thm]{Example}

\title{\bf A complete characterization of plateaued Boolean  functions in terms of their Cayley graphs}
\titlerunning{A complete characterization of plateaued Boolean  functions in terms of their Cayley graphs}
\author{Constanza Riera\inst{1}, Patrick Sol\'e\inst{2}, Pantelimon St\u anic\u a\inst{3}}
\institute{Department of Computing, Mathematics, and Physics,\\
 Western Norway University of Applied Sciences \\
5020 Bergen, Norway; 
\email{\tt csr@hvl.no}
\and
 CNRS/LAGA, University of Paris 8, 2 rue de la Libert\'e,\\
 93 526 Saint-Denis, France; 
 \email{\tt patrick.sole@telecom-paristech.fr}, 
\and
Department of Applied Mathematics, Naval Postgraduate School,\\
Monterey, CA 93943, USA; 
\email{\tt pstanica@nps.edu}}
\authorrunning{C. Riera, P. Sol\'e, P. St\u anic\u a}
\toctitle{A complete characterization of plateaued Boolean  functions in terms of their Cayley graphs }
\tocauthor{C. Riera, P. Sol\'e, P. St\u anic\u a}

\maketitle

\baselineskip=1.1\baselineskip

\begin{abstract}
In this paper we find a complete characterization of plateaued Boolean  functions in terms of  the associated Cayley graphs. Precisely, we show that a Boolean function $f$ is $s$-plateaued (of weight $=2^{(n+s-2)/2}$) if and only if the associated Cayley graph is  a complete bipartite graph between the support of $f$ and its complement (hence the graph is strongly regular of parameters $e=0,d=2^{(n+s-2)/2}$). Moreover, a Boolean function $f$ is $s$-plateaued (of weight $\neq 2^{(n+s-2)/2}$) if and only if the associated Cayley graph is strongly $3$-walk-regular (and also strongly $\ell$-walk-regular, for all odd $\ell\geq 3$) with some explicitly given parameters.
\end{abstract}

\noindent
{\bf Keywords:} Plateaued Boolean functions, Cayley graphs, strongly regular, walk regular.

\section{Introduction}

Boolean functions are very important objects in cryptography, coding theory, and communications, and have connections with many areas of discrete mathematics~\cite{CH1,CS17}.
In particular bent functions, which offer optimal resistance to linear cryptanalysis, when used in symmetric cryptosystems, have been extensively studied~\cite{MesnagerBook,Tok15}.
They were shown in~\cite{BC1,BCV} to be connected to strongly regular graphs. This connection occurs through the Cayley graph with generator set the support of the 
Boolean function (denoted by $\Omega_f$ below). Namely, having two nonzero components in the Walsh-Hadamard spectrum translates at the Cayley graph level as having three eigenvalues.
This link is often referred to as the {\em Bernasconi-Codenotti correspondence}. 

In this paper, we extend this connection by relating semibent and, in general, plateaued functions with a special class of walk-regular graphs. Plateaued Boolean functions are characterized as having
three values in their Walsh-Hadamard spectrum~\cite{Mes14}. Their corresponding Cayley graphs belong to a special class of regular graphs with either three or four eigenvalues in their spectrum.
The three eigenvalue case is dealt with by the strong regularity and the four eigenvalues case corresponds to the strongly $t$-walk-regular graphs introduced
by Fiol and Garriga~\cite{FG07}. The special case of  four  eigenvalues of these graphs was studied in particular in~\cite{DO13}.

The material is organized as follows. The next section compiles the necessary notions and definitions on Boolean functions and graph spectra.
Section 3 derives the main characterization result of the paper.

\section{Preliminaries}
\subsection{Boolean functions}
Let $\F_2$ be the finite field with two elements and $\Z$ be the ring of integers.
For any $n \in \Z^+$, the set of positive integers, let
$[n] = \{1, \ldots, n\}$. The Cartesian product of $n$ copies of $\F_2$ is
$\F_2^n =\{\xx = (x_1, \ldots, x_n) : {x_i \in \F_2, i \in [n]}\}$ which is
an $n$-dimensional vector space over $\F_2$, which we will denote by $\V_n$.
We will denote by $\oplus$, respectively, $+$, the  operations on $\F_2^n$, respectively, $\Z$.
For any $n \in \Z^+$, a function $F : \V_n \rightarrow \F_2$
is said to be a {\em  Boolean function} in $n$ variables. The set of all Boolean functions will be denoted by $\mathcal{B}_n$.
A Boolean function can be regarded as a multivariate polynomial over $\BBF_{2}$, called the {\em algebraic normal form}~(ANF)
\[
f(x_{1},\ldots,x_{n})=a_{0}\+ \sum_{1\leq i\leq n}a_{i}x_{i}\+ \sum_{1\leq i<j\leq n}a_{ij}x_{i}x_{j}\+ \cdots\+  a_{12\ldots n}x_{1}x_{2}\ldots x_{n},
\]
where the coefficients $a_{0},\,a_i,\,a_{ij},\,\ldots,\,a_{12\ldots n}\in\BBF_{2}$.
 The maximum number of variables in a monomial is called the ({\em algebraic}) {\em degree}.

For a Boolean function $f\in\cB_n$, we define its sign function $\hat f$ by $\hat f(\xx)=(-1)^{f(\xx)}$.
For $\uu=(u_1,\ldots,u_n)$, $\xx=(x_1,\ldots,x_n)$, we let $\uu\cdot\xx=\sum_{i=1}^nu_ix_i$ be the regular scalar (inner) product on $\V_n$. For a binary string $\ss$, we let $\bar \ss$ denote the binary complement of $\ss$. The (Hamming) {\em weight} of a binary string $\ss$, denoted by $wt(\ss)$, is the number of nonzero bits in $\ss$.

We order $\BBF_2^n$ lexicographically, and denote $\vv_0=(0,\ldots,0,0)$,
$\vv_1=(0,\ldots,0,1)$, $\vv_{2^n-1}=(1,\ldots,1,1)$.
The {\em  truth table} of a Boolean function $f\in\cB_n$ is the binary string of length $2^n$,
$[f(\vv_0),$ $f(\vv_1),   \ldots, f(\vv_{2^n-1})]$ (we will often omit the commas). The (Hamming) {\em weight} of a function $f$ is the cardinality of the support $\Omega_f=\{\xx\,:\, f(\xx)=1\}$, that is, is the weight of its truth table.
We define the {\em Fourier transform} of $f$  by
\[
\mathcal{W}_f(\uu) = \sum_{\xx\in \V_n}{f(\xx)}(-1)^{\uu \cdot \xx },
\]
and the {\em Walsh-Hadamard transform} of $f$ by
\[
\mathcal{W}_{\hat{f}}(\uu) = \sum_{\xx\in \V_n}(-1)^{f(\xx)}(-1)^{\uu \cdot \xx }.
\]
A function $f$ for which $|\mathcal{W}_{\hat{f}}(\uu)| = 2^{n/2}$ for all $\uu\in \V_n$ is called a {\em bent} function~\cite{RO76}. Further recall that $f\in\mathcal{B}_n$ is called {\em plateaued} if
$|\mathcal{W}_{\hat{f}}(\uu)| \in \{0,2^{(n+s)/2}\}$ for all $\uu\in \V_n$ for a fixed integer $s$ depending on $f$ (we also call $f$ then $s$-{\em plateaued}). If $s=1$ ($n$ must then be odd), or $s=2$ ($n$ must then be even), we call $f$ {\em semibent}.
For more on Boolean functions (bent, semibent, plateaued, etc.), the reader can consult~\cite{Bud14,CH1,CS17,MesnagerBook} and the references therein.

\subsection{A short primer on  strong regularity and walk regularity}

A graph is {\em regular of degree $r$}  (or $r$-regular) if every vertex has degree $r$, where the degree of a vertex is defined as the number of edges incident to it.  We say that an $r$-regular graph $G$ with $v$ vertices is a {\em strongly regular graph} (srg) with parameters $(v,r,e,d)$ if
 there exist nonnegative integers $e,d$ such that for all vertices $\uu,\vv$
the number of vertices adjacent to both $\uu,\vv$ is $e$, (resp. $d$),  if $\uu,\vv$ are
adjacent, (resp. nonadjacent). See~\cite{CDS} for further properties of these graphs.

For a Boolean function $f$ on $\V_n$, we define the {\em Cayley graph} of $f$ to be the graph $G_f=(\V_n,E_f)$
 whose vertex set is $\V_n$, and whose set of edges is defined by
\[
E_f=\{(\ww,\uu)\in \V_n\times \V_n\,:\, f(\ww\oplus \uu)=1\}.
\]

The adjacency matrix $A_f$ is the matrix whose entries are $A_{i,j}=f({\bf i}\oplus {\bf j})$ (where ${\bf i}$ is the binary representation as an $n$-bit vector of the index $i$).
It is simple to prove that $A_f$ has the dyadic property: $A_{i,j}=A_{i+2^{k-1},j+2^{k-1}}$.
One can derive from its definition that $G_f$ is a {\em regular graph of degree
$wt(f)=|\Omega_f|$} (see \cite[Chapter 3]{CDS} for further definitions and properties of these graphs).

Given a graph, $G_f$, and its adjacency matrix, $A$, the {\em spectrum} $Spec(G_f)$ is the
 set of eigenvalues of $A$ (called also the eigenvalues of $G_f$).
We assume throughout that $G_f$ is connected (in fact, one can show that all connected components of $G_f$ are isomorphic)~\cite{BC1,CDS}.

  It is known (see \cite[pp. 194--195]{CDS})
that a connected $r$-regular graph is strongly regular if and only if it has exactly
three distinct eigenvalues
$\lambda_0=r,\lambda_1,\lambda_2$ (so $e=r+\lambda_1\lambda_2+\lambda_1+\lambda_2$,
$d=r+\lambda_1\lambda_2$).
Bent functions exactly correspond to those strongly regular graphs with $e=d$ (Bernasconi-Codenotti correspondence).

The following result is known \cite[Th. 3.32, p. 103]{CDS} (the second part follows from a counting argument and is also well known).
\begin{proposition}
\label{prop1}
If $A$ is the adjacency matrix of a strongly $r$-regular graph of parameters $e,d$ and $|V|=v$, then
\[
A^2=(e-d)A+(r-d)I+dJ,
\]
where $J$ is the all $1$ matrix. Further, $r(r-e-1)=d(v-r-1).$
\end{proposition}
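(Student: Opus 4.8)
The plan is to extract both assertions from the elementary fact that, for the $0$--$1$ adjacency matrix $A$ of a (simple) graph, the entry $(A^2)_{\uu\vv}$ counts the walks of length $2$ from $\uu$ to $\vv$, which is precisely the number of common neighbors of $\uu$ and $\vv$. First I would split the ordered pairs $(\uu,\vv)$ into the three classes that the hypothesis controls. If $\uu=\vv$, the common neighbors of $\uu$ with itself are just its neighbors, so $(A^2)_{\uu\uu}$ equals the degree, which is $r$ by $r$-regularity. If $\uu\neq\vv$ are adjacent, $(A^2)_{\uu\vv}=e$ by the definition of strong regularity; if $\uu\neq\vv$ are nonadjacent, $(A^2)_{\uu\vv}=d$.

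Next I would match these values against the ansatz $A^2 = xA + yI + zJ$ entrywise. On the diagonal $A_{\uu\uu}=0$ and $I_{\uu\uu}=J_{\uu\uu}=1$, so $y+z=r$. For adjacent $\uu\neq\vv$ we have $A_{\uu\vv}=1$, $I_{\uu\vv}=0$, $J_{\uu\vv}=1$, so $x+z=e$. For nonadjacent $\uu\neq\vv$, $A_{\uu\vv}=I_{\uu\vv}=0$ and $J_{\uu\vv}=1$, so $z=d$. Since these three classes exhaust all entries of the matrices, solving the resulting linear system gives $z=d$, $x=e-d$, $y=r-d$, i.e. exactly $A^2=(e-d)A+(r-d)I+dJ$.

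For the numerical identity $r(r-e-1)=d(v-r-1)$, I would multiply the matrix relation on the right by the all-ones vector $\11$. Using $r$-regularity, $A\11=r\11$, hence $A^2\11=r^2\11$, while $I\11=\11$ and $J\11=v\11$. Comparing the scalar coefficients of $\11$ gives $r^2=(e-d)r+(r-d)+dv$, and rearranging yields $r^2-er-r=dv-dr-d$, that is, $r(r-e-1)=d(v-r-1)$. (Equivalently, this is the classical double count of the edges between a neighborhood $N(\uu)$ and its complement in $V\setminus\{\uu\}$: from the $N(\uu)$ side each of the $r$ neighbors sends $r-1-e$ such edges out, while from the other side each of the $v-1-r$ non-neighbors of $\uu$ receives exactly $d$ of them.)

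There is essentially no serious obstacle here; the argument is pure bookkeeping. The only points to watch are the separation of the diagonal case (which produces the $yI$ term and uses $r$-regularity, not merely the existence of the constants $e$ and $d$) from the two off-diagonal cases, and the observation that it is again $r$-regularity that makes $\11$ an eigenvector of $A$, which is what converts the matrix identity into the stated scalar relation.
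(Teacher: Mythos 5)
Your proof is correct and complete. The paper itself gives no proof of this proposition, only a citation to Cvetkovic--Doob--Sachs and the remark that the second identity ``follows from a counting argument''; your walk-counting derivation of the matrix identity (matching $(A^2)_{\uu\vv}$ against the ansatz $xA+yI+zJ$ on the three classes of pairs) and your double count --- equivalently, applying the matrix identity to the all-ones vector --- are exactly the standard arguments being referenced.
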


The distance in the graph $\Gamma=(V,E)$ between two vertices $x, y\in V$, denoted by $d(x, y)$,
is given by the length of the shortest path between $x$ and $y$. The diameter of a graph is $D=\max_{x,y\in V} d(x,y)$.
A connected graph is called  {\em distance-regular} of parameters $(c_i,a_i,b_i)$ (called intersection numbers), if, for all $0\leq i\leq D$, and for all vertices $x,y$ with $d(x,y)=i$, among the neighbors of $y$, there are $c_i$  that are at distance $i-1$ from $x$, $a_i$ at distance $i$, and $b_i$ at distance $i+1$ (thus $\Gamma$ is regular of degree $r=b_0$).

 Fiol and Garriga~\cite{FG07}  introduced $t$-walk-regular graphs as a generalization of both
distance-regular and walk-regular graphs. We call a graph $\Gamma=(V,E)$  a {\em $t$-walk-regular} (assuming $\Gamma$ has its diameter   at least $t$) if the number
of walks of every given length $\ell$ between two vertices $x, y\in V$ depends only on the
distance between $x,y$, provided it is $\leq t$. In \cite{DO13}, van Dam and Omidi generalized this concept and called $\Gamma$ a 
{\em strongly $\ell$-walk-regular} with parameters $(\sigma_\ell,\mu_\ell,\nu_\ell)$ if there are $\sigma_\ell,\mu_\ell,\nu_\ell$ walks of length $\ell$ between every two adjacent, every two non-adjacent, and every two identical vertices, respectively. Certainly, every strongly regular graph of parameters $(v,r,e,d)$ is a strongly $2$-walk-regular graph with parameters $(e,d,r)$.

Similarly to Proposition~\ref{prop1}, the adjacency matrix $A$ of a strongly $\ell$-walk-regular graph will satisfy the following property.
\begin{proposition}[\cite{DO13}]
Let $\ell>1$, and $A$ be the adjacency matrix of a graph $\Gamma$. Then $\Gamma$ is a strongly $\ell$-walk-regular with parameters $(\sigma_\ell,\mu_\ell,\nu_\ell)$ if and only if
\[
A^\ell+(\mu_\ell-\sigma_\ell) A+(\mu_\ell-\nu_\ell)I=\mu_\ell J.
\]
\end{proposition}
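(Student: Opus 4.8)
The plan is to read off the defining property of strong $\ell$-walk-regularity entrywise and recognize it as a single matrix identity. First I would recall the standard fact that for every pair of vertices $x,y\in V$, the $(x,y)$ entry of $A^\ell$ equals the number of walks of length $\ell$ from $x$ to $y$; this follows by an immediate induction on $\ell$ from the definition of matrix multiplication together with the meaning of the entries of $A$.

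Next I would introduce the three Boolean matrices whose supports partition the set of all ordered pairs of vertices: the adjacency matrix $A$ itself (the indicator of adjacent pairs), the identity $I$ (the indicator of equal pairs), and $J-I-A$ (the indicator of distinct non-adjacent pairs, i.e.\ the adjacency matrix of the complement of $\Gamma$). These clearly satisfy $A+I+(J-I-A)=J$. By the definition recalled above, $\Gamma$ is strongly $\ell$-walk-regular with parameters $(\sigma_\ell,\mu_\ell,\nu_\ell)$ exactly when the entry $(A^\ell)_{xy}$ equals $\sigma_\ell$ on adjacent pairs, $\mu_\ell$ on distinct non-adjacent pairs, and $\nu_\ell$ on the diagonal. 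Since these three families of positions are precisely the supports of $A$, of $J-I-A$, and of $I$, this condition is equivalent to the matrix equation
\[
A^\ell=\sigma_\ell A+\mu_\ell(J-I-A)+\nu_\ell I.
\]

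Finally, I would simply rearrange this equation, collecting the $A$-, $I$- and $J$-terms, to obtain $A^\ell+(\mu_\ell-\sigma_\ell)A+(\mu_\ell-\nu_\ell)I=\mu_\ell J$, which is the asserted identity; conversely, starting from that identity and comparing entries at adjacent, at distinct non-adjacent, and at diagonal positions recovers the three prescribed walk counts, so the equivalence runs in both directions. I do not expect a genuine obstacle: the only point needing a word of care is the degenerate situation in which $\Gamma$ is complete or edgeless, so that one of the three position-classes is empty and its parameter is a priori unconstrained --- but in that case the matrix identity likewise places no constraint on that parameter, so the ``if and only if'' is unaffected. The hypothesis $\ell>1$ is not actually used; it merely matches the framework of~\cite{DO13}.
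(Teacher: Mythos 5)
Your proof is correct, and since the paper cites this proposition from van Dam--Omidi without reproducing a proof, your entrywise walk-counting argument is exactly the standard (and intended) one: decompose $J=A+I+(J-I-A)$, identify $(A^\ell)_{xy}$ with the walk count, and rearrange. Nothing to add.
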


\section{Plateaued Boolean functions}

In general, the spectrum of the Cayley graph of an  $s$-plateaued Boolean function $f:\F_2^n\to\F_2$ will be 4-valued, and therefore the graph will not be  strongly regular (see  \cite[Theorem 9.7]{CS17}). This can be easily deduced from the fact that, if the Walsh-Hadamard transform of a Boolean function takes values in $\{0,\pm k\}$ (for $s$-plateaued functions, $k=2^{(n+s)/2}$), then the Fourier transform of $f$ takes values in $\{wt(f),0,\pm \frac{k}{2}\}$ (recall that the Fourier transform of $f$ gives the graph spectrum of the corresponding Cayley graph), as the following argument shows.

By  \cite[Eq. (2.15)]{CS17}, $$\W_f(\ww)=2^{n-1}\delta(\ww)-\frac{1}{2}\W_{\hat{f}}(\ww),$$
where $\delta$ is the Kronecker delta.
Note that, for $\ww=\00, \W_f(\00)=wt(f)$.
By Parseval's identity (see~\cite{CS17}), $\displaystyle 2^{2n}=\sum_{\ww\in\F_2^n} |\W_{\hat{f}}(\ww)|^2$, the multiplicity of $\pm k$ is $\frac{2^{2n}}{k^2}$. Hence, the multiplicity of these eigenvalues will be (assuming $wt(f)\neq \frac{k}{2}$; the other case follows easily):
\begin{enumerate}
\item[$(i)$] If $f$ is balanced, then $\W_{\hat{f}}(\00)=0$, while $\W_f(\00)=wt(f)$.  Then, the multiplicity of $\lambda_1=wt(f)$ is 1, the multiplicity of $\lambda_3=0$ is $2^n-\frac{2^{2n}}{k^2}-1$, while the multiplicities of $\lambda_{2},\lambda_4=\pm \frac{k}{2}$ will sum to $\frac{2^{2n}}{k^2}$.
\item[$(ii)$]  If $f$ is not balanced, then $\W_{\hat{f}}(\00)=\pm k$, while $\W_f(\00)=wt(f)$. Then, the multiplicity of $\lambda_1=wt(f)$ is 1, the multiplicity of 0 is $2^n-\frac{2^{2n}}{k^2}$, while the multiplicities of $\pm \frac{k}{2}$ will sum to $\frac{2^{2n}}{k^2}-1$.
\end{enumerate}

{\bf Example}: $n=3$, $f=x_1x_2\oplus x_1x_3\oplus x_2x_3$, which is semibent, since $\W_{\hat{f}}(\ww)=(0\ 4 \ 4 \ 0 \ 4\ 0\ 0\ -4))^T$. We compute that  $\W_f(\ww)=(4\ -2\ -2\ 0 \ -2\ 0\ 0\ 2)^T$, which is 4-valued.

Certainly, if $f$ is semibent, the multiplicities are more precisely known (see~\cite{Mes14}, for example).
For instance, if $n$ is odd (without loss of generality, we assume that $f(\00)=0$),  the multiplicities of the spectra coefficients of $\hat f$ are
\begin{align*}
{\text{value}} \qquad & \qquad {\text {multiplicity}}\\
0  \qquad & \qquad  2^{n-1}\\
2^{(n+1)/2}  \qquad & \qquad  2^{n-2}+2^{(n-3)/2}\\
-2^{(n+1)/2}  \qquad & \qquad  2^{n-2}-2^{(n-3)/2}.
\end{align*}

We show in Figure~\ref{semibent1} the Cayley graph of a semibent function.
\begin{figure}[h]
\begin{center}
\includegraphics[width=.6\textwidth]{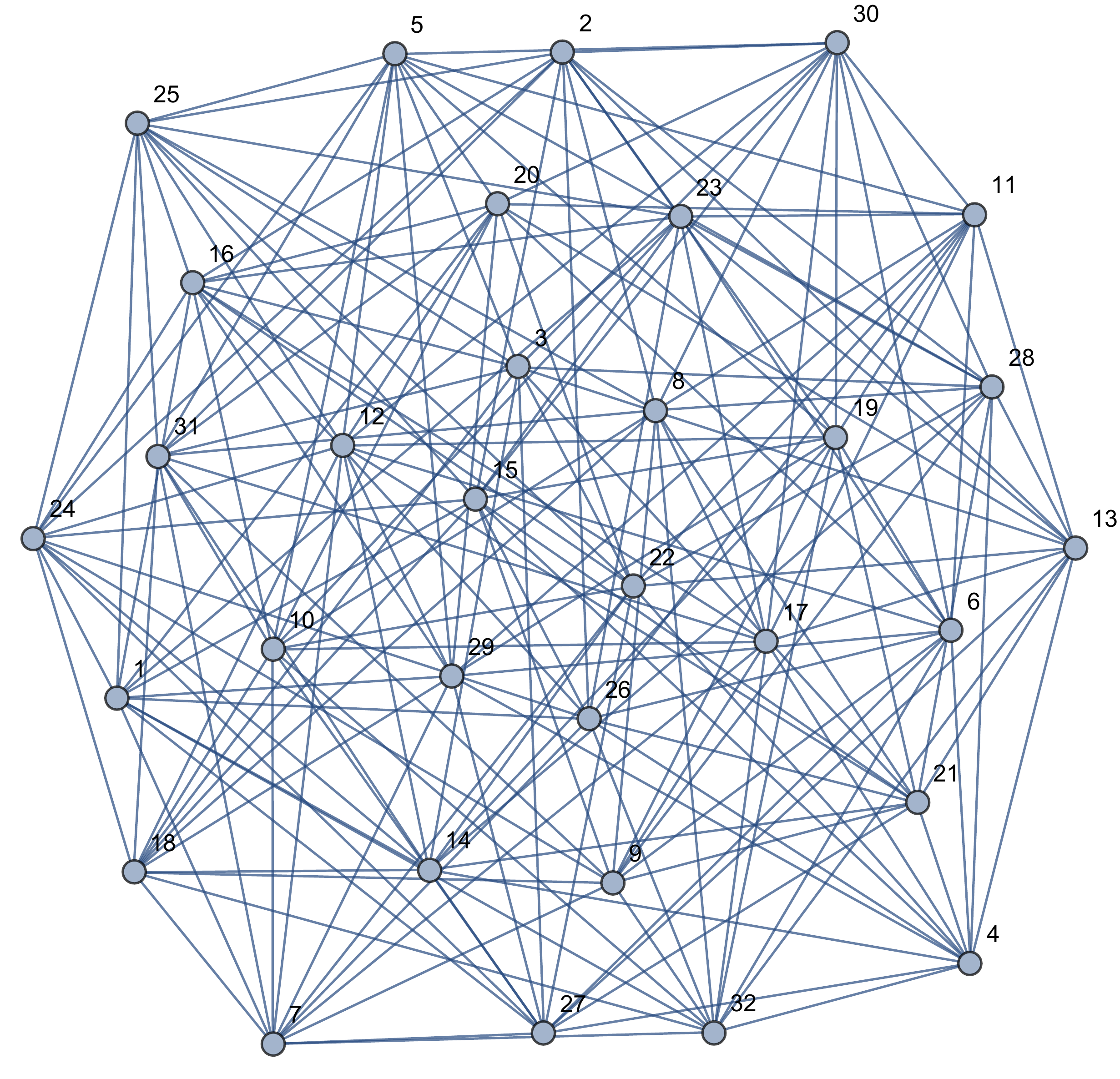}
\end{center}
\caption{Cayley graph associated to the semibent $f(\xx)=x_1 x_2 \+ x_3 x_4 \+ x_1 x_4 x_5  \+ x_2 x_3 x_5 \+ x_3 x_4 x_5$}
\label{semibent1}
\end{figure}

\subsection{$s$-Plateaued Boolean functions $f$ with $wt(f)=2^{(n+s-2)/2}$}

\begin{thm}
If $f:\F_2^n\to \F_2$ is $s$-plateaued and $wt(f)=2^{(n+s-2)/2}$, then  $G_f$ (if connected) is the complete bipartite graph between the vectors in $\Omega_f$ and vectors in $\F_2^n\setminus \Omega_f$ (if disconnected, it is a union of complete bipartite graphs). Moreover, $G_f$ is a strongly regular graph with $(e,d)=\left(0,2^{(n+s-2)/2}\right)$.
\end{thm}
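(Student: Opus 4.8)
The plan is to exploit the coincidence that, under the hypothesis $wt(f)=2^{(n+s-2)/2}$, writing $k=2^{(n+s)/2}$ we have $wt(f)=k/2$; this is precisely the degenerate case set aside in the discussion preceding the statement, and it forces the (generically four-valued) spectrum of $G_f$ to collapse to at most three values. Concretely, from $\W_f(\ww)=2^{n-1}\delta(\ww)-\tfrac12\W_{\hat f}(\ww)$ and $|\W_{\hat f}(\ww)|\in\{0,k\}$ one sees that every Fourier coefficient $\W_f(\ww)$ --- equivalently, as recalled in Section~2, every eigenvalue of the adjacency matrix $A_f$ --- lies in $\{wt(f)\}\cup\{0,\pm k/2\}$, the value $wt(f)$ occurring only at $\ww=\00$; since $wt(f)=k/2$, the distinct eigenvalues of $G_f$ lie in $\{k/2,0,-k/2\}$, and the degree $r=wt(f)=k/2>0$ is always among them.

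Assume first that $G_f$ is connected. If it has exactly three distinct eigenvalues $\lambda_0=r$, $\lambda_1=0$, $\lambda_2=-k/2$, then by the characterization recalled just before Proposition~\ref{prop1} it is strongly regular, and $e=r+\lambda_1\lambda_2+\lambda_1+\lambda_2$, $d=r+\lambda_1\lambda_2$ give at once $e=0$ and $d=k/2=2^{(n+s-2)/2}$, the asserted parameters. (The sub-cases with fewer distinct eigenvalues force $r=1$ and $G_f=K_2=K_{1,1}$, which is still complete bipartite with $e=0$, $d=1=r$.)

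It remains to identify a strongly regular graph with $e=0$, $d=r$ as a complete bipartite graph. By Proposition~\ref{prop1} we get $A_f^2=r(J-A_f)$: $G_f$ is triangle-free and any two distinct non-adjacent vertices have exactly $r$ common neighbours. Fix $x$; its $r$ neighbours are pairwise non-adjacent, so for any two of them $y,z$ we have $|N(y)\cap N(z)|=r=|N(y)|$, hence $N(y)=N(z)$. Thus all neighbours of $x$ share a single neighbourhood $A$ with $x\in A$, $|A|=r$; writing $B=N(x)$, one checks that $A$ and $B$ are disjoint independent sets, every vertex of $A$ is adjacent to every vertex of $B$, and every vertex of $A\cup B$ has all its neighbours in $A\cup B$. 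Connectedness then gives $\V_n=A\sqcup B$, so $G_f=K_{r,r}$; taking $x=\00$ (we may assume $f(\00)=0$) yields $B=N(\00)=\Omega_f$ and $A=\V_n\setminus\Omega_f$, the asserted complete bipartite graph between $\Omega_f$ and its complement (and $2r=2^n$ then forces $wt(f)=2^{n-1}$, i.e.\ $s=n$, in the connected case). If $G_f$ is disconnected, its components are mutually isomorphic (recalled in Section~2) and each is connected, $r$-regular, with spectrum inside $\{r,0,-r\}$; the same analysis makes each component a $K_{r,r}$, so $G_f$ is a disjoint union of copies of $K_{r,r}$.

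I expect the delicate step to be the last one --- showing that strong regularity with $e=0$ and $d=r$ pins down the complete bipartite graph; this is done either by the neighbourhood argument sketched above or by invoking the classification of triangle-free strongly regular graphs (complete bipartite graphs, or Moore graphs, which have $d=1\neq r$ unless $r=1$). Everything before that is bookkeeping whose only genuine ingredient is the observation $wt(f)=k/2$.
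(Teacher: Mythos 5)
Your proof is correct and follows essentially the same route as the paper's: the hypothesis $wt(f)=2^{(n+s-2)/2}$ collapses the Cayley graph spectrum to $\{r,0,-r\}$, forcing strong regularity with $(e,d)=(0,r)$. The only real difference is that where the paper simply cites the spectral characterization of complete bipartite graphs from \cite{CDS}, you re-derive it via the elementary common-neighbourhood argument from $A_f^2=r(J-A_f)$, and you also treat the degenerate (fewer than three eigenvalues) and disconnected cases more explicitly than the paper does.
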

\begin{proof}
We know that the Walsh-Hadamard spectra of $\hat f$ in this case is $\{0,\pm 2^{(n+s)/2}\}$ and therefore, the spectra of $f$ is
also 3-valued, that is, $\{ wt(f),0,\pm 2^{(n+s-2)/2}\}=\{0,\pm 2^{(n+s-2)/2}\}$, and thus, the Cayley graph of $f$ in this case is strongly regular.  Now, from~\cite{CDS}, we know that if $G_f$ has three distinct eigenvalues $\lambda_0=wt(f)>\lambda_1=0>\lambda_2=-\lambda_0$
, then $G_f$ is the complete bipartite graph between the nodes in $\Omega_f$ and nodes in $\F_2^n\setminus \Omega_f$.

Since the eigenvalues of the strongly regular graph $G_f$ of $f$ can be expressed in terms of the parameters $e,d$, namely
\begin{align*}
\lambda_0=wt(f),\
\lambda_{1,2}=\frac{1}{2}\left(e-d\pm \sqrt{(e-d)^2-4(d-wt(f)}) \right),
\end{align*}
or equivalently, $e=r+\lambda_1\lambda_2+\lambda_1 +\lambda_2,d=r+\lambda_1\lambda_2$, and given the Walsh-Hadamard spectra of $f$,  the last claim follows.
\qed
\end{proof}

\subsection{General $s$-plateaued Boolean functions}

We now assume that $f$ is $s$-plateaued and $wt(f)\neq 2^{(n+s-2)/2}$,  and, therefore, the spectrum of $G_f$ is 4-valued.
It is known (see~\cite{Huang}) that if $G$ is connected and regular with four distinct eigenvalues, then $G$ is walk-regular. In fact, in our case a result much stronger is true (see our theorem below).
We will need the following two propositions (we slightly change notations, to be consistent).
\begin{proposition}[van Dam and  Omidi \textup{\cite[Proposition 4.1]{DO13}}]
\label{propDO1}
 Let $\Gamma$ be a connected regular graph with four distinct eigenvalues $r > \lambda_2 > \lambda_3 > \lambda_4$. Then $\Gamma$
is strongly $3$-walk-regular if and only if $\lambda_2+\lambda_3+\lambda_4 = 0$.
\end{proposition}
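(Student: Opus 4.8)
The plan is to reduce the claim to the matrix identity for strongly $\ell$-walk-regular graphs recorded just above (the Proposition of van Dam and Omidi), and then to exploit that a connected regular graph with only four distinct eigenvalues has its all-ones matrix $J$ expressible as a cubic polynomial in the adjacency matrix $A$.

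First, applying that Proposition with $\ell=3$, the graph $\Gamma$ is strongly $3$-walk-regular with some parameters $(\sigma,\mu,\nu)$ if and only if
\[
A^3+(\mu-\sigma)A+(\mu-\nu)I=\mu J.
\]
Since $\Gamma$ is connected and $r$-regular, the all-ones vector spans the $r$-eigenspace, and the Hoffman-type identity yields
\[
J=\frac{v}{(r-\lambda_2)(r-\lambda_3)(r-\lambda_4)}\,(A-\lambda_2 I)(A-\lambda_3 I)(A-\lambda_4 I),
\]
with $v$ the number of vertices of $\Gamma$; each factor $r-\lambda_i$ is nonzero precisely because the four eigenvalues are distinct, so the right-hand side is a polynomial of degree exactly $3$ in $A$.

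Next I would write $p_1=\lambda_2+\lambda_3+\lambda_4$, $p_2=\lambda_2\lambda_3+\lambda_2\lambda_4+\lambda_3\lambda_4$, $p_3=\lambda_2\lambda_3\lambda_4$, expand the cubic as $A^3-p_1A^2+p_2A-p_3I$, and substitute the resulting expression for $J$ into the walk-regularity identity. Because $A$ is symmetric with exactly four distinct eigenvalues, its minimal polynomial has degree $4$, so $I,A,A^2,A^3$ are linearly independent and one may equate coefficients term by term. The coefficient of $A^3$ forces $\mu=(r-\lambda_2)(r-\lambda_3)(r-\lambda_4)/v\neq 0$; the coefficient of $A^2$ forces $\mu p_1=0$, hence $\lambda_2+\lambda_3+\lambda_4=0$; and the coefficients of $A$ and $I$ then determine $\sigma=\mu-p_2$ and $\nu=\mu+p_3$ uniquely. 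This delivers both directions simultaneously: if $\Gamma$ is strongly $3$-walk-regular, comparing $A^2$-coefficients gives $\lambda_2+\lambda_3+\lambda_4=0$; conversely, if that sum vanishes, choosing $\mu,\sigma,\nu$ by the formulas just obtained makes the displayed identity hold, so $\Gamma$ is strongly $3$-walk-regular.

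The only step needing genuine care is the linear independence of $I,A,A^2,A^3$, i.e. that the minimal polynomial of $A$ has degree $4$ — this is exactly the hypothesis of four distinct eigenvalues combined with the diagonalizability of the symmetric matrix $A$. Everything else is the routine expansion of a cubic and the matching of four coefficients; in particular the non-vanishing of $\mu$, needed for the identity to have the shape demanded by the Proposition, is immediate from $r>\lambda_2>\lambda_3>\lambda_4$ and $v>0$.
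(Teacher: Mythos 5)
Your argument is correct. Note that the paper does not prove this statement at all---it is quoted from van Dam and Omidi \cite{DO13}---so there is no in-paper proof to compare against; but your route is a clean and complete one. The key ingredients all check out: the Hoffman identity $J=\frac{v}{(r-\lambda_2)(r-\lambda_3)(r-\lambda_4)}(A-\lambda_2I)(A-\lambda_3I)(A-\lambda_4I)$ is valid because connectedness and regularity make the all-ones vector span the $r$-eigenspace, the denominators are nonzero since $r$ strictly exceeds the other eigenvalues, and $I,A,A^2,A^3$ are linearly independent because the symmetric matrix $A$ with four distinct eigenvalues has minimal polynomial of degree four, so coefficient matching is legitimate. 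Equating the $A^3$- and $A^2$-coefficients then gives $\mu\neq0$ and $\lambda_2+\lambda_3+\lambda_4=0$, and conversely the explicit choices $\mu=(r-\lambda_2)(r-\lambda_3)(r-\lambda_4)/v$, $\sigma=\mu-p_2$, $\nu=\mu+p_3$ make the identity hold, whence the walk counts are constant by reading off entries. Your approach is essentially a matrix-level repackaging of the argument one would give from Proposition~\ref{propDO2} (which the paper does state): there, the three eigenvalues other than $r$ must be exactly the roots of the cubic $x^3+(\mu-\sigma)x+(\mu-\nu)$, which has no quadratic term, so their sum vanishes; your $A^2$-coefficient computation is the same fact seen through the Hoffman polynomial. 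Either phrasing works; yours has the small advantage of producing the parameter formulas and the non-vanishing of $\mu$ in one pass.
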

\begin{proposition}[van Dam and  Omidi \textup{\cite[Proposition 3.1]{DO13}}]
\label{propDO2}
 A connected $r$-regular graph $\Gamma$ on $v$ vertices is strongly $\ell$-walk-regular with parameters
$(\sigma_\ell,\mu_\ell,\nu_\ell)$ if and only if all eigenvalues except $r$ are roots of the equation
\[
x^\ell+(\mu_\ell-\sigma_\ell)x+\mu_\ell-\nu_\ell=0,
\]
and $r$ satisfies
\[
r^\ell+(\mu_\ell-\sigma_\ell)r+\mu_\ell-\nu_\ell=\mu_\ell v.
\]
\end{proposition}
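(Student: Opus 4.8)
\medskip
\noindent\emph{Proof strategy.} The plan is to deduce this spectral characterization directly from the matrix identity in the (unlabelled) Proposition stated immediately before it, namely that $\Gamma$ is strongly $\ell$-walk-regular with parameters $(\sigma_\ell,\mu_\ell,\nu_\ell)$ if and only if
\[
A^\ell+(\mu_\ell-\sigma_\ell)A+(\mu_\ell-\nu_\ell)I=\mu_\ell J,
\]
and then to read this identity off eigenspace by eigenspace. Write $p(x)=x^\ell+(\mu_\ell-\sigma_\ell)x+(\mu_\ell-\nu_\ell)$, so the identity above is just $p(A)=\mu_\ell J$. Since $\Gamma$ is connected and $r$-regular, Perron--Frobenius gives that $r$ is a \emph{simple} eigenvalue of $A$ with eigenvector the all-ones vector $\mathbf 1$; and since $A$ is real symmetric, $\R^v$ decomposes orthogonally as $\langle \mathbf 1\rangle \oplus \mathbf 1^{\perp}$, where $\mathbf 1^{\perp}$ is spanned by eigenvectors of $A$ whose eigenvalues are precisely the eigenvalues of $\Gamma$ other than $r$. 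Note also $J\mathbf 1 = v\mathbf 1$ while $J$ annihilates $\mathbf 1^{\perp}$.

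For the forward implication, assume $\Gamma$ is strongly $\ell$-walk-regular, so $p(A)=\mu_\ell J$. Applying both sides to an eigenvector $\mathbf w\in\mathbf 1^{\perp}$ with $A\mathbf w=\lambda\mathbf w$ gives $p(\lambda)\mathbf w = \mu_\ell J\mathbf w = \mathbf 0$, hence $p(\lambda)=0$; since every eigenvalue $\neq r$ arises this way, each such eigenvalue is a root of $p$. Applying $p(A)=\mu_\ell J$ to $\mathbf 1$ gives $p(r)\mathbf 1 = \mu_\ell v\,\mathbf 1$, i.e. $r^\ell+(\mu_\ell-\sigma_\ell)r+(\mu_\ell-\nu_\ell)=\mu_\ell v$, which is the displayed condition on $r$.

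For the converse, suppose every eigenvalue $\lambda\neq r$ satisfies $p(\lambda)=0$ and that $p(r)=\mu_\ell v$. Set $M:=p(A)-\mu_\ell J$. Working in an orthonormal eigenbasis of $A$: on $\mathbf 1$ we get $M\mathbf 1 = (p(r)-\mu_\ell v)\mathbf 1 = \mathbf 0$, and on each basis vector $\mathbf w\in\mathbf 1^{\perp}$ with eigenvalue $\lambda$ we get $M\mathbf w = p(\lambda)\mathbf w - \mu_\ell J\mathbf w = \mathbf 0$. Hence $M=0$, that is $p(A)=\mu_\ell J$, and the preceding Proposition yields that $\Gamma$ is strongly $\ell$-walk-regular with parameters $(\sigma_\ell,\mu_\ell,\nu_\ell)$.

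The only genuinely load-bearing point is the simplicity of the eigenvalue $r$: without it, the phrase ``all eigenvalues except $r$'' would not faithfully describe the action of $A$ on $\mathbf 1^{\perp}$, and the stated equivalence could break down. This is exactly where the connectedness hypothesis is used, via Perron--Frobenius. A minor companion remark is that, for a graph with at least one edge, $r>0$, so $-r\neq r$ even in the bipartite case, and $-r$ is simply counted among the eigenvalues $\neq r$ with no special treatment needed. Everything else is routine linear algebra, so I expect no real obstacle beyond bookkeeping.
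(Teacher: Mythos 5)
Your proof is correct. The paper does not prove this statement --- it is quoted verbatim from van Dam and Omidi \cite[Proposition 3.1]{DO13} --- but your derivation from the matrix identity $A^\ell+(\mu_\ell-\sigma_\ell)A+(\mu_\ell-\nu_\ell)I=\mu_\ell J$ of the preceding proposition is exactly the standard argument, and you correctly isolate the one load-bearing point: connectedness forces $r$ to be a simple eigenvalue (Perron--Frobenius), so that $\mathbf 1^{\perp}$ is spanned by eigenvectors for the eigenvalues other than $r$, which is what makes both directions of the eigenspace-by-eigenspace reading valid.
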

In our main theorem of this section we show the counterpart for the Bernasconi-Codenotti equivalence in the case of plateaued functions.
\begin{thm}
\label{thm1}
Let $f:\F_2^n\to\F_2$ be a Boolean function, and assume that $G_f$ is connected, and that $r:=wt(f)\neq 2^{(n+s-2)/2}$.
Then, $f$ is $s$-plateaued (with $4$-valued spectra for $f$) if and only if $G_f$ is strongly $3$-walk-regular of parameters $(\sigma,\mu,\nu)=(2^{-n}r^3+2^{n+s-2} -2^{s-2}r,2^{-n}r^3-2^{s-2}r,2^{-n}r^3-2^{s-2}r)$ (hence $\mu=\nu$).
\end{thm}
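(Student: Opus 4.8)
The plan is to translate the Walsh-Hadamard/plateaued hypothesis into a statement about the eigenvalues of $A_f$, and then invoke Propositions~\ref{propDO1} and~\ref{propDO2} to get the strong $3$-walk-regularity together with the explicit parameters. The key bridge, already recalled in the preamble to the theorem, is the identity $\W_f(\ww)=2^{n-1}\delta(\ww)-\tfrac12\W_{\hat f}(\ww)$, which says that the spectrum of $G_f$ (i.e.\ the multiset $\{\W_f(\ww):\ww\in\F_2^n\}$) is obtained from the Walsh-Hadamard spectrum of $\hat f$ by negating and halving, except at $\ww=\00$ where the value is $r=wt(f)$. So $f$ being $s$-plateaued with $4$-valued $f$-spectrum is \emph{equivalent} to $G_f$ being a connected regular graph whose distinct eigenvalues are exactly $\lambda_0=r$ together with three further values $\lambda_2>\lambda_3>\lambda_4$ drawn from $\{0,\;2^{(n+s-2)/2},\;-2^{(n+s-2)/2}\}$; since $r\neq 2^{(n+s-2)/2}$ and the spectrum is genuinely $4$-valued, all three of $0,\pm2^{(n+s-2)/2}$ must actually occur. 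Write $t:=2^{(n+s-2)/2}$, so the non-principal eigenvalues are $\{t,0,-t\}$.

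\textbf{First} I would do the forward direction. With non-principal eigenvalues $\lambda_2=t$, $\lambda_3=0$, $\lambda_4=-t$ we have $\lambda_2+\lambda_3+\lambda_4=0$, so Proposition~\ref{propDO1} immediately gives that $G_f$ is strongly $3$-walk-regular. To pin down the parameters, apply Proposition~\ref{propDO2} with $\ell=3$: the three non-principal eigenvalues must be the roots of $x^3+(\mu-\sigma)x+(\mu-\nu)=0$. Since the roots are $t,0,-t$, their product is $0$, forcing $\mu-\nu=0$, i.e.\ $\mu=\nu$; and $\mu-\sigma$ equals (minus) the second elementary symmetric function of the roots, namely $-(t\cdot 0+t\cdot(-t)+0\cdot(-t))=t^2=2^{n+s-2}$, so $\sigma-\mu=2^{n+s-2}$. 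The remaining equation of Proposition~\ref{propDO2}, $r^3+(\mu-\sigma)r+(\mu-\nu)=\mu v$ with $v=2^n$, then reads $r^3-2^{n+s-2}r=2^n\mu$, giving $\mu=2^{-n}r^3-2^{s-2}r$ and hence $\nu=\mu$ and $\sigma=\mu+2^{n+s-2}=2^{-n}r^3-2^{s-2}r+2^{n+s-2}$. These are exactly the claimed values of $(\sigma,\mu,\nu)$.

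\textbf{For the converse}, suppose $G_f$ is connected and strongly $3$-walk-regular with those parameters; in particular it is regular of degree $r=\W_f(\00)=wt(f)$. By Proposition~\ref{propDO2} (the ``only if'' direction), every eigenvalue of $G_f$ other than $r$ is a root of $x^3+(\mu-\sigma)x+(\mu-\nu)=x^3-2^{n+s-2}x=x(x-t)(x+t)$, so every non-principal eigenvalue lies in $\{0,t,-t\}$. Translating back through $\W_{\hat f}(\ww)=-2\W_f(\ww)$ for $\ww\neq\00$ (and noting $|\W_{\hat f}(\00)|\in\{0,2^{(n+s)/2}\}$ must be checked separately — see below), we get $\W_{\hat f}(\ww)\in\{0,\pm 2t\}=\{0,\pm 2^{(n+s)/2}\}$ for all $\ww$, which is precisely the statement that $f$ is $s$-plateaued; the hypothesis $wt(f)\neq t$ is what guarantees the $f$-spectrum is honestly $4$-valued rather than collapsing to the strongly regular case.

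\textbf{The main obstacle} I anticipate is the bookkeeping at $\ww=\00$ in the converse: Proposition~\ref{propDO2} only constrains the \emph{non-principal} eigenvalues, i.e.\ $\W_f(\ww)$ for $\ww\neq\00$, so it tells us nothing directly about $\W_{\hat f}(\00)$. One must argue separately that $|\W_{\hat f}(\00)|\in\{0,2^{(n+s)/2}\}$; the clean way is Parseval, $\sum_{\ww}\W_{\hat f}(\ww)^2=2^{2n}$, combined with the fact that all other $\W_{\hat f}(\ww)$ are now known to be $0$ or $\pm2^{(n+s)/2}$, which forces $\W_{\hat f}(\00)^2$ to be a multiple of $2^{n+s}$; together with the trivial bound $|\W_{\hat f}(\00)|\le 2^n$ and a parity/divisibility argument this yields $\W_{\hat f}(\00)\in\{0,\pm2^{(n+s)/2}\}$. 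One should also dispatch the degenerate possibility that fewer than three distinct non-principal eigenvalues occur (which would make $G_f$ strongly regular and put us in the previous subsection, contradicting $wt(f)\neq t$ after the same eigenvalue computation). Finally, the parenthetical ``and also strongly $\ell$-walk-regular for all odd $\ell\ge 3$'' follows at no extra cost: for odd $\ell$ the three values $0,t,-t$ are again roots of $x^\ell+(\mu_\ell-\sigma_\ell)x+(\mu_\ell-\nu_\ell)=x(x^{\ell-1}-t^{\ell-1})$ — using that $\ell-1$ is even so $t^{\ell-1}=(-t)^{\ell-1}$ — so Proposition~\ref{propDO2} applies verbatim with $\mu_\ell-\nu_\ell=0$ and $\sigma_\ell-\mu_\ell=t^{\ell-1}=2^{(\ell-1)(n+s-2)/2}$, and $\mu_\ell=2^{-n}(r^\ell-t^{\ell-1}r)$.
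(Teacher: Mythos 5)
Your argument follows the paper's proof essentially step for step: both directions rest on Propositions~\ref{propDO1} and~\ref{propDO2}, and your Vieta-style determination of $(\sigma,\mu,\nu)$ is just a repackaging of the linear system the authors write out and solve. The one place you go beyond the paper is the converse at $\ww=\00$: you are right that Proposition~\ref{propDO2} only controls the non-principal eigenvalues, hence only $\W_{\hat f}(\ww)$ for $\ww\neq\00$, and the paper's own proof of the converse silently skips this point. Be aware, however, that the patch you sketch does not yet close it: Parseval plus divisibility only yields $\W_{\hat f}(\00)=2^n-2r=k\,2^{(n+s)/2}$ for some integer $k$ with $k^2+m=2^{n-s}$ (where $m$ is the number of nonzero non-principal Walsh values), and nothing in that arithmetic forces $k\in\{0,\pm1\}$; for instance $f=x_1x_2x_3$ on $\F_2^6$ has every non-principal Walsh value in $\{0,\pm16\}$ yet $\W_{\hat f}(\00)=48$ (its Cayley graph is disconnected, so it does not contradict the theorem, but it shows the counting alone is insufficient). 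Finishing this step would require invoking connectivity or the multiplicity constraints explicitly --- something neither your write-up nor the published proof actually does.
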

\begin{proof}
We first assume that $f$ is $s$-plateaued and so, its spectra is $\{0,\pm 2^{(n+s)/2}\}$. Consequently, the spectra of $G_f$ is $4$-valued (since $r:=wt(f)\neq 2^{(n+s-2)/2}$), namely $\{r=wt(f),\lambda_2:=2^{(n+s-2)/2},\lambda_3:=0,\lambda_4:=-2^{(n+s-2)/2}\}$. The fact that $G_f$ is strongly 3-walk-regular follows from  Proposition~\ref{propDO1}, since $\lambda_2+\lambda_3+\lambda_4=0$, which certainly happens for our graphs. Moreover, the parameters $(\sigma,\mu,\nu)$ (we removed, for convenience, the subscripts $\ell=3$) can be found using Proposition~\ref{propDO2} as solutions to the diophantine system (recall that in our case $v=2^n$ and $r=wt(f)$)
\begin{align*}
0&=2^{3(n+s-2)/2}+(\mu-\sigma)2^{(n+s-2)/2}+\mu-\nu,\\
0&=-2^{3(n+s-2)/2}-(\mu-\sigma)2^{(n+s-2)/2}+\mu-\nu,\\
\mu\, 2^n&=r^3+(\mu-\sigma)r+\mu-\nu,
\end{align*}
namely, $(\sigma,\mu,\nu)=(2^{-n}r^3+2^{n+s-2} -2^{s-2}r,2^{-n}r^3-2^{s-2}r,2^{-n}r^3-2^{s-2}r)$.

Conversely, assuming $G_f$ is a $3$-walk-regular graph with the above parameters, then the eigenvalues $\lambda_2>\lambda_3>\lambda_4$ will satisfy the equation
\begin{align*}
x^3+(\mu-\sigma)x+\mu-\nu=0,
\end{align*}
which will render the roots, $\lambda_2=2^{(n+s-2)/2},\lambda_3=0,\lambda_4=-2^{(n+s-2)/2}$. The claim is shown.
\qed
\end{proof}

\begin{remark}
Using a result of Godsil~\textup{\cite{Go88}} one can easily show (under mild conditions -- thus removing strongly regular ones, for example) that the graphs corresponding to plateaued functions are not distance-regular.
\end{remark}

In fact, from~\cite{DO13} we know that the graph with four distinct eigenvalues is $\ell$-walk-regular for any odd $\ell\geq 3$, but in our case we can show a lot more, by finding the involved parameters precisely.

\begin{thm}
If $A$ is the adjacency matrix of the Cayley graph corresponding to an $s$-plateaued with $4$-valued spectra (of $f$), then $G_f$ is strongly $\ell$-walk-regular for any odd $\ell$ of parameters $(\sigma_\ell,\mu_\ell,\nu_\ell)$, where  $\ell=2t+1$, $\sigma_\ell=\mu \frac{2^{(n+s-2)t}-r^{2t}}{2^{n+s-2}-r^2}+2^{(n+s-2)t}$,  $\mu_\ell=\nu_\ell=\mu \frac{2^{(n+s-2)t}-r^{2t}}{2^{n+s-2}-r^2}$.  Further,  the following identity holds, for all $t\geq 1$,
\[
A^{2t+1}=2^{(n+s-2)t}A+ \mu \frac{2^{(n+s-2)t}-r^{2t}}{2^{n+s-2}-r^2}\,J\enspace,
\]
where $(\sigma,\mu,\nu)=(2^{-n}r^3+2^{n+s-2} -2^{s-2}r,2^{-n}r^3-2^{s-2}r,2^{-n}r^3-2^{s-2}r)$.
\end{thm}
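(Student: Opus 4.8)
The plan is to compute $A^{2t+1}$ directly from the spectral decomposition of $A$ and then read the walk-regularity parameters off the resulting matrix identity. Write $q=2^{n+s-2}$. Under our hypotheses ($f$ is $s$-plateaued with $4$-valued spectrum and $r:=wt(f)\neq 2^{(n+s-2)/2}=\sqrt q$), the discussion preceding Theorem~\ref{thm1} shows that the eigenvalues of $A$ are exactly $r,\ \sqrt q,\ 0,\ -\sqrt q$, with $r$ simple and eigenvector $\11$ (recall that $G_f$ is connected, as assumed throughout). Let $E_0,E_1,E_2,E_3$ be the corresponding orthogonal eigenprojections, so that
\[
A=rE_0+\sqrt q\,E_1+0\cdot E_2-\sqrt q\,E_3,\qquad E_0+E_1+E_2+E_3=I,\qquad E_0=2^{-n}J .
\]

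First I would raise $A$ to an odd power $\ell=2t+1$. Since $\ell$ is odd, $(-\sqrt q)^{\ell}=-q^{t}\sqrt q$ and $0^{\ell}=0$, whence
\[
A^{2t+1}=r^{2t+1}E_0+q^{t}\sqrt q\,E_1+0\cdot E_2-q^{t}\sqrt q\,E_3=r^{2t+1}E_0+q^{t}\bigl(\sqrt q\,E_1-\sqrt q\,E_3\bigr).
\]
Recognizing $\sqrt q\,E_1-\sqrt q\,E_3=A-rE_0$ (the $0\cdot E_2$ term contributing nothing to $A$), this rearranges to
\[
A^{2t+1}=q^{t}A+\bigl(r^{2t+1}-q^{t}r\bigr)E_0=q^{t}A+\frac{r\,(r^{2t}-q^{t})}{2^{n}}\,J .
\]
It then remains to reconcile the coefficient of $J$ with the asserted closed form: using $\mu=2^{-n}r^3-2^{s-2}r=2^{-n}r(r^2-q)$ together with the factorization $r^{2t}-q^{t}=-(q-r^2)\bigl(q^{t-1}+q^{t-2}r^2+\cdots+r^{2t-2}\bigr)$, which is legitimate precisely because $r^2\neq q$, one checks that $\frac{r(r^{2t}-q^{t})}{2^{n}}=\mu\,\frac{q^{t}-r^{2t}}{q-r^2}$; call this common value $c_t$. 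This is exactly the displayed matrix identity of the theorem.

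Finally I would extract the parameters. The identity reads $A^{2t+1}+(-q^{t})A+0\cdot I=c_tJ$, which is the matrix-form criterion of van Dam and Omidi with $\mu_\ell-\sigma_\ell=-q^{t}$, $\mu_\ell-\nu_\ell=0$, $\mu_\ell=c_t$; equivalently, via Proposition~\ref{propDO2}, the nontrivial eigenvalues $0,\pm\sqrt q$ are the roots of $x^{2t+1}-q^{t}x=x(x^{2t}-q^{t})=0$, while $r^{2t+1}-q^{t}r=c_t\,2^{n}$ pins down $\mu_\ell$. Hence $G_f$ is strongly $\ell$-walk-regular with $\mu_\ell=\nu_\ell=c_t$ and $\sigma_\ell=c_t+q^{t}=c_t+2^{(n+s-2)t}$, i.e.\ with precisely the claimed parameters (and, being entries of $A^{2t+1}$, they are automatically nonnegative integers). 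I do not expect a genuine obstacle here: the only points needing care are the parity bookkeeping in the second step --- for even $\ell$ one would instead obtain $q^{t}(E_1+E_3)$, which is not a linear combination of $A$, $I$, $J$, so the statement really does require $\ell$ odd --- and the algebraic simplification of the $J$-coefficient. If preferred, the same identity follows by induction on $t$ from the case $t=1$ (Theorem~\ref{thm1}, which reads $A^{3}=2^{n+s-2}A+\mu J$), using $A^{2t-2}J=r^{2t-2}J$ to reduce $A^{2t+1}=A^{2t-2}\bigl(2^{n+s-2}A+\mu J\bigr)$.
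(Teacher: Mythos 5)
Your argument is correct, and your primary route differs from the paper's. The paper proves the identity by induction on $t$: it starts from the case $t=1$, namely $A^{3}=(\sigma-\mu)A+\mu J$ established in Theorem~\ref{thm1}, writes $A^{2t+1}=x_tA+y_tJ$, multiplies by $A^{2}$ using $A^{2}J=r^{2}J$ to obtain the recurrences $x_{t+1}=x_tx_1$ and $y_{t+1}=x_ty_1+y_tr^{2}$, and then solves them in closed form (the geometric-sum solution of the second recurrence is exactly your telescoping factorization of $q^{t}-r^{2t}$ by $q-r^{2}$). You instead compute $A^{2t+1}$ directly from the spectral decomposition $A=rE_0+\sqrt q\,E_1-\sqrt q\,E_3$ with $E_0=2^{-n}J$, using the oddness of the exponent to collapse $q^{t}\sqrt q\,(E_1-E_3)$ back into $q^{t}(A-rE_0)$; this yields the closed form in one step, with no recurrence to solve, and it makes visibly clear why the statement fails for even $\ell$ (one would get $E_1+E_3$, which is not in the span of $A$, $I$, $J$). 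The paper's induction is more elementary --- it needs only the already-proved cubic identity and the relation $AJ=rJ$, not the eigenprojections --- and it leans directly on Theorem~\ref{thm1}; your version is shorter and more structural but requires invoking the full spectral decomposition (in particular the symmetry of $A$ and the simplicity of the eigenvalue $r$ for the connected graph, both of which do hold here). Your concluding remark even sketches the paper's induction as an alternative, so the two proofs are reconciled. The extraction of $(\sigma_\ell,\mu_\ell,\nu_\ell)$ from the matrix identity via Proposition~\ref{propDO2} matches the paper's implicit use of the van Dam--Omidi criterion.
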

\begin{proof}
From our Theorem~\ref{thm1}, we know that
\[
A^3=(\sigma-\mu)A+\mu J,
\]
since we know that $\mu=\nu$. We will show our result by induction, and so, for simplicity we label $x_1:=\sigma-\mu=2^{n+s-2},y_1:=\mu=2^{-n}r^3-2^{s-2}r$.
Assume now that
\begin{equation}
\label{eq1}
A^{2t+1}=x_t A+y_t J.
\end{equation}
First, observe that, since our graph is regular of degree $r$, then $AJ=rJ$, and more general, $A^k J=r^k J$.
Multiplying~\eqref{eq1} by $A^2$, we get
\begin{align*}
A^{2t+3}
&=x_t A^3+y_t A^2 J\\
&=x_t (x_1 A+y_1 J)+y_t r^2 J\\
&=x_tx_1 A+(x_ty_1+y_tr^2)J,
\end{align*}
and consequently, we get the recurrences
\begin{align*}
x_{t+1}&=x_tx_1\\
y_{t+1}&=x_ty_1+y_tr^2.
\end{align*}
\begin{sloppypar}
Solving the system, we get $x_{t+1}=x_1^{t+1}=(\sigma-\mu)^{t+1}=2^{(n+s-2)(t+1)}$ and $\displaystyle y_{t+1}=y_1\frac{x_1^{t+1}-r^{2(t+1)}}{x_1-r^2}=\mu \frac{2^{(n+s-2)(t+1)}-r^{2(t+1)}}{2^{n+s-2}-r^2}$, and our claim is shown.
\end{sloppypar}
\qed
\end{proof}


\begin{thebibliography}{99}


\bibitem{BC1} Bernasconi, A., Codenotti, B.,
{\em Spectral Analysis of Boolean Functions as a Graph Eigenvalue
Problem}, IEEE Trans. Computers {48:3} (1999), 345--351.

\bibitem{BCV}
Bernasconi, A., Codenotti, B., VanderKam, J. M.,
{\em A Characterization of Bent Functions in terms of Strongly Regular Graphs},
IEEE Trans. Computers { 50:9} (2001), 984--985.

\bibitem{Bud14}
Budaghyan, L., Construction and Analysis of Cryptographic Functions, Springer-Verlag, 2014.

 \bibitem{CH1} Carlet, C., {\em Boolean Functions for Cryptography and Error  Correcting Codes},
Chapter of the volume ``Boolean Models and Methods in Mathematics, Computer Science, and Engineering'', Cambridge University Press (Eds. Y. Crama, P. Hammer) (2010), pp. 257--397.

\bibitem{CS17} Cusick, T. W., St\u anic\u a, P.,
{Cryptographic Boolean Functions and Applications} (2nd Ed.), Academic Press, San Diego, CA,  2017 (1st Ed., 2009).

\bibitem{CDS} Cvetkovic, D. M., Doob, M., Sachs, H.,
 Spectra of Graphs, Academic Press, 1979.

\bibitem{DH97}
van Dam, E. R.,  Haemers, W. H., {\em A characterization of distance-regular graphs with
diameter three}, J. Algebraic Combin. 6 (1997), 299--303.

\bibitem{DO13}
 van Dam, E. R.,  Omidi, G. R., {\em Strongly walk-regular graphs}, J. Combin. Theory Ser. A 120 (2013), 803--810.

 \bibitem{FG07}
 Fiol, M. A.  Garriga, E., {\em Spectral and geometric properties of $k$-walk-regular graphs}, Electron.
Notes Discrete Math. 29 (2007), 333--337.

\bibitem{Go88}
 Godsil, C. D., {\em Bounding the diameter of distance-regular graphs}, Combinatorica 8:4 (1988), 333--343.

\bibitem{Huang} Huang, X., Huang, Q., {\em On regular graphs with four distinct eigenvalues}, Linear Algebra  Appl. 512 (2017), 219--233.

\bibitem{Mes14} Mesnager, S.,
{\em On semi-bent functions and related plateaued functions over the Galois field $\F_{2^n}$}, Proceedings ``Open Problems in Mathematics and Computational Science'', LNCS, Springer, pp. 243--273, 2014.

\bibitem{MesnagerBook}  Mesnager, S., { Bent functions: fundamentals and results}, Springer Verlag, 2016.

\bibitem{RO76}
 Rothaus,  O.~S., {\em On Bent Functions},  J. Combin. Theory Ser. A  20 (1976) 300--305.

\bibitem{Tok15} Tokareva, N., Bent Functions, Results and Applications to Cryptography, Academic Press, San Diego, CA,  2015.

\end{thebibliography}
\end{document}